\newtheorem{definition}{Definition}[section]
\newtheorem{theorem}{Theorem}[section]
\newtheorem{lemma}{Lemma}[section]
\newtheorem{conjecture}{Conjecture}[section]
\newcommand{\cont}[2]{{\mathcal #1}.\text{cont}\,{#2}} 
\newcommand{\del}[2]{{\mathcal #1}.{\rm del}\,{#2}} 
\newcommand{\delp}[2]{{\mathcal #1}.{\rm del}{'}\,{#2}}
\newcommand{\trace}[2]{{\mathcal #1}.{\rm trace}\,{#2}} 
\title{On the Averaging Problem of Ideal Families Related to Frankl's Conjecture with Formal Proof by Lean 4}
\author{Masahiro Hachimori \quad University of Tsukuba\\
Kenji Kashiwabara\footnote{Corresponding author} \quad  The University of Tokyo}
\date{}
\begin{document}


\maketitle

\begin{abstract}
Frankl's conjecture, also known as the union-closed sets conjecture, can be equivalently expressed in terms of intersection-closed set families by considering the complements of sets. It posits that any family of sets closed under intersections, and containing both the ground set and the empty set, must have a ``rare vertex'' — a vertex belonging to at most half of the members of the family. 
The concept of \emph{average rarity} describes a set family where the average degree of all the elements is at most half of the number of its members. 
Average rarity is a stronger property that implies the existence of a rare vertex.
This paper focuses on ideal families, which are set families that are downward-closed (except the ground set) and include the ground set. We present a proof that the normalized degree sum of any ideal family is non-positive, which is equivalent to saying that every ideal family satisfies the average rarity condition. 
This proof is formalized and verified using the Lean 4 theorem prover.
\end{abstract}

\section{Introduction}

\subsection{Frankl's conjecture}

Frankl’s conjecture, also referred to as the union-closed sets conjecture, is a well-known and enduring open problem in combinatorics, first proposed in 1979 by P. Frankl \cite{Frankl1979}. 
Its deceptively simple formulation and far-reaching implications have sparked significant interest.
It states that in any nonempty union-closed family on finite sets, there exists an element that is contained in at least half of the sets.  When considering the dual notion through complements, the conjecture can also be phrased in terms of intersection-closed families, asserting the existence of an element contained in at most half of the members.

Consider set families on a nonempty finite ground set $U$. An element in $U$ is called a \emph{vertex}. We call a member of a set family $\mathcal{F}$ a \emph{hyperedge}.
A family of sets $\mathcal{F}$ is called \emph{intersection-closed} if for any two sets $A, B \in \mathcal{F}$, their intersection $A \cap B$ is also in $\mathcal{F}$. 

In the intersection-closed form, Frankl's conjecture asserts that any nonempty intersection-closed family on a finite set, containing both the empty set and the ground set, must have at least one \emph{rare vertex}, which is an element appearing in at most half of the hyperedges in the family. Instead of assuming the existence of the ground set and the empty set, it is equivalent to assume that the set family contains at least two sets for considering the conjecture. 

\bigskip

Despite substantial efforts, the conjecture remains unresolved \cite{Bruhn2015}. However, it has been verified for specific cases, such as families with small cardinality (up to 11 elements \cite{Bosnjak2008}) or those exhibiting particular structural properties \cite{MoghaddasMehr2023,Morris2006,Roberts2010}. 
 It also connects to lattice theory and related areas of combinatorics, highlighting its mathematical richness 
\cite{Abe2000,Poonen1992,Reinhold2000}. 
Recent significant progress by Gilmer~\cite{Gilmer2022} establishes that the existence
of an element contained in a constant ratio of sets of the family (for the union-closed form of the conjecture).
The ratio was improved to $\frac{3-\sqrt{5}}{2}\approx 0.381966$ 
by several authors~\cite{add4-AHS,add3-CL,add5-Pebody,add2-Sawin},
and further improvements have been made in subsequent works~\cite{add7-Cambie,add8-Liu,add6-Yu}, but there still remains a gap to the conjectured ratio of 1/2.

Hachimori and Kashiwabara \cite{Hachimori2024} investigated Frankl’s conjecture in the intersection-closed form through minor operations such as deletion and contraction. 
Their work focuses on the properties that minimal counterexamples of the conjecture have to satisfy. 
In this paper, we continue to adopt the approach of the intersection-closed form, but focus on a different direction.

In this paper, as explained later, we provide a formalized proof by Lean 4. Previously,
Marić, Zivković, and Vučković \cite{Maric2012}  utilized the Coq proof assistant to formalize FC-families and upward-closed families, offering a structured framework for systematically analyzing these families. 
Coq is another theorem prover system that is widely used.
Their Coq-based techniques are invaluable for handling upward-closed families in formal proof contexts.



\subsection{Averaging approach to Frankl's conjecture}

In this paper, we focus on the averaging approach, which discusses the average degree over all vertices instead of the existence of the rare (or abundant) vertices.
This is one of the important approaches, see \cite[Sec. 6]{Bruhn2015},\cite{Czedli2009} for example.
In this paper, the average rarity of set families is the main topic,  where a set family is average rare if the average of the degrees of the vertices is at most 1/2. This is a stronger property than the existence of a rare vertex.
To analyze the average rarity of a set family, we introduce the normalized degree sum of a set family as a measure.
If the normalized degree sum of a set family is non-positive, it is an average rare family.


It should be remarked that the average rarity is a condition strictly stronger than the existence of a rare vertex, and the intersection-closed families that are average rare form a strict subclass. 
Therefore, the problem to identify the class of average rare families is a different problem from Frankl's conjecture. 
Still, this problem itself has an importance in the study of the combinatorial structures of set families, and may contribute to the conjecture to some extent.
The main theorem of this paper provides a new class of intersection-closed families, ideal families, that are average rare.

\bigskip

Ideal families, introduced in Section~\ref{subsec:idealfamily} of this paper, constitute a particular class of set families that are downward-closed except the ground set and include both the empty set and the ground set. Although we have not found the exact same statement, it is probably known that any ideal family has a rare vertex. For example, \cite{Nagel2022} refers that a filter family cannot be a counterexample of the union-closed set conjecture. However, it is different from ours because our ideal family has always the ground set as its hyperedge. 
We show in Lemma~\ref{lemma:idealrare} that ideal families contain rare vertices, that is, no counterexamples to Frankl's conjecture can be found among ideal families. Further, we show in Theorem~\ref{theorem:main} that they are average rare.



\subsection{Formalization of the theorem}
As well as giving a (human-written) proof for the main theorem, we additionally provide a formalized proof by Lean 4. 
This formalization is publicly available on our GitHub repository \cite{K2024GitHUB}. We describe the outline in Section~\ref{sec:lean4}.
%
Lean 4~\cite{Moura2021} is a modern theorem prover and programming language based on dependent type theory, and is designed to formalize mathematical proofs.


The formal verification of the proof using a theorem prover like Lean 4 not only enhances the rigor of the result but also demonstrates the utility of formal methods in ensuring correctness in mathematical research. The ability to formalize and verify proofs provides a high level of assurance that can be difficult to achieve through traditional human verification.
This can benefit readers, for example, by making it easier to verify the proofs.
Formal proofs eliminate ambiguity and ensure that all logical steps are valid. This level of rigor is especially valuable in complex combinatorial proofs, where subtle errors can easily go unnoticed. In this paper, the correctness of the statements and their proofs have been rigorously validated using the Lean 4 system. 
For verification,
it is not necessary for the readers to directly examine the Lean 4 proof code. Barring exceptional circumstances, it is reasonable to trust in the validity of the proofs. 
Formal proofs will reduce the efforts of the readers in verifying the correctness and, instead, the readers can focus their efforts in grasping the outlines.

The theorem prover can also identify gaps or errors that might be overlooked in human-written proofs. This capability helped us refine the inductive argument and ensure that all cases were properly handled. 
Lean 4 can detect conditions that are not used in the proof.

Once the definitions and lemmas are formalized in Lean 4,  they can be reused in future proofs and research. The structures and operations defined for ideal families can be extended to broader classes of set families, making Lean 4 a powerful tool for ongoing combinatorial research. It is also well-suited for collaborative proof development by large groups.

The study \cite{gusakov2024formalizing} formalizes the minor excluded characterization of binary matroids using the Lean 4 theorem prover. 
This study is not directly related to Frankl's conjecture, 
but is relevant within the framework of set families.
We incorporated some of the definitions in his paper.

\bigskip
Here, we briefly explain how we created the proofs written in Lean 4.
In general,
creating formal proofs in Lean 4 was cumbersome and time-consuming. However, the advent of AI assistants and large language models (LLMs) has introduced tools that greatly streamline high-level code creation. Thanks to these advancements, the process of writing proof code has become much more efficient. In creating the proofs presented here, we greatly benefited from the assistance of AI assistant tools such as ChatGPT~\cite{chatgpt}. 
There are several other tools that are helpful.
Lean Copilot \cite{lean_copilot} is a tool integrated into the Lean 4 system. GitHub Copilot~\cite{github_copilot} is an AI-powered coding assistant that supports various other programming languages. Lean search~\cite{leansearch} is a search engine for Lean 4 tactics. 
However, writing formal proofs in Lean 4 is still a time-consuming work.
Currently, AI assistants often suggest outdated syntax or theorems in the format of Lean 3, which requires corrections and adjustments.  This issue is expected to be improved in the future. In the future, these tools are expected to evolve further and come closer to achieving automated theorem proving.


\bigskip\bigskip

The remainder of this paper is organized as follows. In Section 2, we provide definitions of key concepts, including set families, ideal families, normalized degree sum, and minors. Section 3 presents examples of ideal families to illustrate these concepts in a more concrete way and calculates their normalized degree sum. In Section 4, we detail the proof of the main theorem, that is, the average rarity condition for ideal families. Section 5 presents the formal proof of the main theorem by Lean 4. Section 6 concludes the paper and suggests directions for future research.

\section{Mathematical preliminaries}

\subsection{Intersection-closed families, rarity, and average rarity}

Let $U$ be a nonempty finite ground set. An element of $U$ is called a \emph{vertex}. A set family $\mathcal{F}$ is a collection of subsets of $U$. An element of $\mathcal{F}$ is called a \emph{hyperedge}.  

\begin{definition}[Intersection-closed family]
A set family $\mathcal{F}$ on $U$ is \emph{intersection-closed} if, for any $A, B \in \mathcal{F}$, their intersection $A \cap B$ is also in $\mathcal{F}$. 
\end{definition}

\begin{definition}[Degree of a vertex]
For a vertex $v \in U$, the \emph{degree} of $v$ in $\mathcal{F}$, denoted $\deg_{\mathcal{F}}(v)$, is the number of hyperedges in $\mathcal{F}$ that contain the vertex $v$:

\[
\deg_{\mathcal{F}}(v) = |\{ H \in \mathcal{F} \mid v \in H \}|.
\]
\end{definition}

\begin{definition}[Rare vertex]
For a set family $\mathcal{F}$ and a vertex $v \in U$, $v$ is rare if $\deg_{\mathcal{F}}(v) \leq |\mathcal{F}|/2$.
\end{definition}

By these terminologies, Frankl's conjecture can be expressed as follows.

\begin{conjecture}
Every intersection-closed set family $\mathcal{F}$ with $\{U,\emptyset\}\subseteq \mathcal{F}$ has a rare vertex.
\end{conjecture}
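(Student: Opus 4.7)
The statement is Frankl's conjecture itself, which has remained open for over four decades, so any ``proof proposal'' here is necessarily speculative rather than a plan I can realistically expect to carry out. Still, several natural lines of attack suggest themselves. The plan would be to combine induction on $|U|$ (or on $|\mathcal{F}|$) with the averaging approach: rather than directly hunting for a rare vertex, one would try to show that the average degree $\frac{1}{|U|}\sum_{v \in U} \deg_{\mathcal{F}}(v)$ is at most $|\mathcal{F}|/2$, since this immediately yields a rare vertex by pigeonhole. The main theorem of this paper establishes exactly such an averaging statement for the restricted class of ideal families, so a natural strategy would be to reduce the general case to the ideal case via minor operations (deletion, contraction, and trace), which is the framework developed in Hachimori--Kashiwabara~\cite{Hachimori2024}.

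For the reduction, I would pick a vertex $v$ and consider the two natural minors obtained by restricting to hyperedges containing $v$ (and then deleting $v$) and by restricting to hyperedges not containing $v$. An inductive argument would aim to show that if the conjecture fails for $\mathcal{F}$, then it also fails for a strictly smaller minor, contradicting minimality of a putative counterexample. Alternatively, following Gilmer's recent breakthrough~\cite{Gilmer2022}, one could take an entropy-theoretic route: for independent uniform samples $A,B$ from $\mathcal{F}$, intersection-closedness gives $A\cap B\in \mathcal{F}$, hence $H(A\cap B)\le \log_2 |\mathcal{F}|$, and a careful accounting of conditional entropies forces the most abundant vertex to appear in a positive fraction of the hyperedges.

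The hard part, and the reason this conjecture has resisted so long, is that neither route is known to reach the tight constant $1/2$. Pure induction on minors tends to fail because deleting or contracting a vertex can destroy the delicate intersection structure, or produce a minor whose rare vertex has no meaningful lift back to $\mathcal{F}$. The entropy approach currently caps out at a constant around $0.38$, and it is unclear whether any refinement closes the gap to $1/2$. A genuinely new idea --- perhaps a structural decomposition isolating ideal-like substructures inside arbitrary intersection-closed families, so that the averaging result established later in this paper can be bootstrapped to the general case --- seems necessary. Accordingly, the most honest proposal I can offer is to acknowledge that the displayed statement is the celebrated open problem, and that the concrete contribution of the present paper lies not in settling it but in establishing the stronger average-rarity property for the ideal subclass.
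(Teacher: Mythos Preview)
Your assessment is correct: the displayed statement is Frankl's conjecture, and the paper does not prove it---it is stated explicitly as a \emph{conjecture}, with no accompanying proof. Your recognition that any ``proof proposal'' here is necessarily speculative, together with your accurate summary of the known partial approaches (averaging, minor-based induction, Gilmer's entropy method) and their limitations, is exactly the right response.
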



In this paper, our main concern is in average rarity.
We say that a family is \emph{average rare} if the average of the degrees over all vertices is at most half the number of hyperedges of the family. This can be expressed using the normalized degree sum as follows.

\begin{definition}[Normalized degree sum]
The \emph{normalized degree sum} of $\mathcal{F}$ is defined as:

\[
{\rm NDS}(\mathcal{F}) = 2 \cdot \sum_{v \in U} \deg_{\mathcal{F}}(v) -  |U|\cdot |\mathcal{F}|.
\]

If ${\rm NDS}(\mathcal{F}) \leq 0$, we say that $\mathcal{F}$ satisfies the \emph{average rarity condition}.
\end{definition}

A family is average rare if it satisfies the average rarity condition, since $\text{NDS}(\mathcal{F}) \leq 0$ is equivalent to $\sum_{v\in U}\deg_{\mathcal{F}}(v) / |U|\le \frac{1}{2} |\mathcal{F}|$.
From this, the following lemma is straightforward.

\begin{lemma}\label{lemma:averagerare}
If a set family $\mathcal{F}$ satisfies the average rarity condition, $\mathcal{F}$ has a rare vertex.
\end{lemma}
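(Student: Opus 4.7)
The plan is a direct pigeonhole/averaging argument: the minimum of a collection of numbers is always at most their average, so if the average degree is at most $|\mathcal{F}|/2$, then some vertex must attain at most that value.

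First I would restate the hypothesis $\mathrm{NDS}(\mathcal{F}) \le 0$ in the equivalent form
\[
\sum_{v \in U} \deg_{\mathcal{F}}(v) \;\le\; \frac{|U|\cdot|\mathcal{F}|}{2},
\]
which is immediate from the definition of $\mathrm{NDS}$ after dividing by $2$. Note that $|U| \ge 1$ since the ground set is assumed nonempty, so dividing by $|U|$ is legitimate.

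Next I would argue by contradiction: suppose every $v \in U$ satisfies $\deg_{\mathcal{F}}(v) > |\mathcal{F}|/2$. Summing this strict inequality over all $v \in U$ yields
\[
\sum_{v \in U} \deg_{\mathcal{F}}(v) \;>\; |U|\cdot\frac{|\mathcal{F}|}{2},
\]
which contradicts the inequality obtained from the average rarity condition. Hence there exists some $v \in U$ with $\deg_{\mathcal{F}}(v) \le |\mathcal{F}|/2$, which is by definition a rare vertex.

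There is essentially no obstacle here; this is the textbook observation that \emph{average rare implies existence of a rare element}. The only very minor subtlety in a formal proof would be handling the inequality in integer versus rational arithmetic (the quantity $|\mathcal{F}|/2$ may not be an integer), but working with the doubled inequality $2\deg_{\mathcal{F}}(v) \le |\mathcal{F}|$ everywhere sidesteps this cleanly and matches the form in which $\mathrm{NDS}$ is defined.
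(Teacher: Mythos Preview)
Your proof is correct and is exactly the standard averaging argument the paper has in mind; the paper itself does not spell out a proof but simply notes that the lemma is ``straightforward'' from the equivalence $\mathrm{NDS}(\mathcal{F})\le 0 \iff \sum_{v}\deg_{\mathcal{F}}(v)/|U|\le \tfrac{1}{2}|\mathcal{F}|$, which is precisely what you unpack.
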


Some intersection-closed set families may not satisfy the average rarity condition.
For example, $\{\,\emptyset, \{a\}, \{a,b\}, \{a,c\}, \{a,b,c\} \,\}$ is an intersection-closed family but ${\rm NDS}=1 > 0$. (Though  this family is not average rare, this family has rare vertices $b$ and $c$.)

The normalized degree sum measures the deviation from an even distribution of vertices among hyperedges. 

\medskip
Denote the total sum of the size of the hyperedge by 
$$\text{TSH}(\mathcal F) = \sum_{H\in {\mathcal F}}|H| = \sum_{v \in U} \deg_{\mathcal{F}}(v).$$
The second equality above follows from the double counting principle.
By this, we have
$$ {\rm NDS}(\mathcal{F}) = 2\cdot {\rm TSH}(\mathcal{F}) - |U|\cdot |\mathcal{F}|.$$

\subsection{Ideal families}
\label{subsec:idealfamily}

\begin{definition}[Ideal family]
A set family $\mathcal{F} \subseteq 2^{U}$ is called an \emph{ideal family} if it satisfies the following conditions:

\begin{enumerate}
    \item \textbf{Contains Empty Set}: $\emptyset \in \mathcal{F}$.
    \item \textbf{Contains Ground Set}: $U \in \mathcal{F}$.
    \item \textbf{Downward-Closed except $U$}: For all $A, B \in \mathcal{F}$ with $A \neq U$, if $B \subseteq A$, then $B \in \mathcal{F}$.
\end{enumerate}
\end{definition}

In other words, an ideal family includes all subsets of its nonempty hyperedges, except the ground set. This property ensures that once a hyperedge (which is not the ground set) is contained, all smaller hyperedges contained by it are also included. 
Obviously, every ideal family is closed under intersections.
Remark that the set family in which the ground set is its sole hyperedge is not allowed by the condition that the family must contain the empty set.

The next lemma can be proven relatively easily by demonstrating the existence of an injection, which is a commonly used argument in this field.
\begin{lemma} \label{lemma:idealrare}
Every ideal family has a rare vertex.
\end{lemma}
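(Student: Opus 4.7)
The plan is to produce, for some vertex $v$, an injection from $v$-containing hyperedges into $v$-avoiding hyperedges that fails to be surjective. For each $v \in U$, define
$$\phi_v : \{H \in \mathcal{F} : v \in H, H \neq U\} \to \{H \in \mathcal{F} : v \notin H\}, \qquad \phi_v(H) = H \setminus \{v\}.$$
The downward-closed-except-$U$ condition guarantees $H \setminus \{v\} \in \mathcal{F}$ whenever $v \in H \in \mathcal{F}$ and $H \neq U$, so $\phi_v$ is well-defined, and injectivity is immediate. Comparing sizes already gives $\deg_{\mathcal{F}}(v) - 1 \leq |\mathcal{F}| - \deg_{\mathcal{F}}(v)$ for every $v$, i.e., $\deg_{\mathcal{F}}(v) \leq (|\mathcal{F}|+1)/2$, which is one step short of the rarity bound.

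To close this gap, it suffices to exhibit a $v$ for which $\phi_v$ is not surjective; then the size comparison becomes strict by one (in integers), upgrading the inequality to $\deg_{\mathcal{F}}(v) \leq |\mathcal{F}|/2$. Suppose for contradiction that $\phi_v$ is surjective for every $v \in U$. Unfolding surjectivity, this says that for every $K \in \mathcal{F}$ with $v \notin K$, both $K \cup \{v\} \in \mathcal{F}$ and $K \cup \{v\} \neq U$ must hold. Now pick any maximal element $M$ (with respect to inclusion) of the finite collection $\mathcal{F} \setminus \{U\}$, which is nonempty since $\emptyset \in \mathcal{F} \setminus \{U\}$. Because $M \subsetneq U$, choose any $v \in U \setminus M$; applying the consequence of surjectivity with $K = M$ yields $M \cup \{v\} \in \mathcal{F}$ and $M \cup \{v\} \neq U$, so $M \cup \{v\}$ is an element of $\mathcal{F} \setminus \{U\}$ strictly containing $M$, contradicting maximality. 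Hence some $\phi_v$ fails to be surjective, and that $v$ is the desired rare vertex.

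The main obstacle here is really just the off-by-one: the hyperedge $U$ is the unique member of $\mathcal{F}$ not governed by downward-closedness, so it has to be excluded from the domain of $\phi_v$, costing one in the size comparison. That cost is recovered by the maximal-hyperedge trick, which shows $\phi_v$ cannot be a bijection simultaneously for every $v$. In the Lean~4 formalization I would expect the fiddly case analysis around the exceptional hyperedge $U$ — in particular the computation of the domain cardinality and the strict-versus-non-strict conversion — to be the only place requiring careful bookkeeping.
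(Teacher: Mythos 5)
Your proof is correct and rests on the same core idea as the paper's: a maximal hyperedge $M \neq U$, a vertex $v \notin M$, and the injection $H \mapsto H \setminus \{v\}$ justified by downward-closedness, with $M$ absorbing the off-by-one caused by the ground set. The paper packages this by extending the map to send $U$ to $M$ and proving injectivity of the combined map, whereas you exclude $U$ from the domain and observe that $M$ cannot lie in the image; the counting is identical either way.
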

\begin{proof}
Consider a maximal hyperedge with respect to the inclusion relation, excluding the ground set. Let \( v \) be a vertex that does not belong to this maximal set.
To show that vertex \( v \) is rare, it suffices to construct an injection from hyperedges containing \( v \) to hyperedges not containing \( v \). Such a required mapping can be defined as follows:

\begin{itemize}
    \item If a hyperedge \( H \) containing \( v \) is not the ground set, then map \( H \) to the hyperedge \( H \setminus \{v\} \), which does not contain \( v \).
    \item If a hyperedge containing \( v \) is the ground set, map it to a maximal hyperedge under the inclusion relation that does not include \( v \), excluding the ground set.
\end{itemize}

This mapping is injective because different hyperedges are mapped to different hyperedges. If two hyperedges are mapped to the same hyperedge \( J \), the original hyperedge is  \( J \cup \{v\} \) (the case that \( J \cup \{v\} \) is a hyperedge) or the ground set (the case that \( J \cup \{v\} \) is not a hyperedge).

\end{proof}

The main theorem (Theorem~\ref{theorem:main}) of this
paper states that 
any ideal family $\mathcal{F}$ over a nonempty finite ground set $U$ is average rare, i.e., the normalized degree sum $\text{NDS}(\mathcal{F})\leq 0$.
That is, our main aim is to strengthen Lemma~\ref{lemma:idealrare} to average rarity.
Note that our proof of the main theorem relies on Lemma~\ref{lemma:idealrare}.

%

\subsection{Minors of ideal families}

Consider a set family $\mathcal{F}$ on the ground set $U$.  In this subsection, we assume that $\mathcal{F}$ has a ground set whose size is at least two.
We consider three operators, deletion, contraction, and trace. These operators map a set family to a set family whose ground set is smaller by one. These minor operations are defined for general set families, and especially, they preserve intersection-closedness~\cite{Hachimori2024}, that is, intersection-closed families are mapped to intersection-closed families by these operators.
In using these operators for ideal families, however, 
we need specialized treatments in order to ensure the operations preserve the families to be ideal.

\paragraph{\emph{Deletion minor}: $\del{F}{v}$ \\[1mm]}
    
For a set family $\mathcal{F}$ and $v\in U$, 
the deletion $\del{F}{v}$ is the family consisting of
the hyperedges in $\mathcal{F}$ that do not contain $v$, i.e., $\del{F}{v} = \{ H \in \mathcal{F} \mid v \notin H \}$.
When $\mathcal{F}$ is an ideal family, the deletion is downward-closed, but it may not contain the ground set  
$U \setminus\{v\}$.
In this case, in order to make the family to be an ideal family, we add the ground set to the family. We define this operation as the deletion operator for ideal families as follows. 
$$\delp{F}{v} = \{ H \in \mathcal{F} \mid v \notin H \}\cup \{U\setminus \{v\}\}.$$
It is easy to verify that this family is an ideal family.

\begin{lemma} \label{lemma:deletion}
For an ideal family $\mathcal{F}$ and $v\in U$,  the deletion $\delp{F}{v}$ of $v$ 
is also an ideal family on the ground set $U \setminus \{v\}$.
\end{lemma}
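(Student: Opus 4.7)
The plan is to verify the three defining conditions of an ideal family (containing the empty set, containing the ground set, and downward-closedness except the ground set) for $\delp{F}{v}$ on the new ground set $U \setminus \{v\}$. First I would observe that every element of $\delp{F}{v}$ is indeed a subset of $U \setminus \{v\}$: the hyperedges coming from $\{H \in \mathcal{F} \mid v \notin H\}$ are subsets of $U$ not containing $v$, and $U \setminus \{v\}$ is a subset of itself.

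Next I would dispatch the two easy containment conditions. The empty set is in $\mathcal{F}$ by hypothesis and does not contain $v$, so $\emptyset \in \delp{F}{v}$. The new ground set $U \setminus \{v\}$ is in $\delp{F}{v}$ by construction, since it is explicitly adjoined by the definition of $\mathcal{F}.\mathrm{del}'$.

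The only substantive check is downward-closedness except for the new ground set. I would take any $A \in \delp{F}{v}$ with $A \neq U \setminus \{v\}$, and any $B \subseteq A$. Since $A \neq U \setminus \{v\}$, the set $A$ cannot be the adjoined element, so it must come from the first part of the union: $A \in \mathcal{F}$ with $v \notin A$. The key observation is then that $A \neq U$, because $v \in U$ but $v \notin A$; this lets me invoke the downward-closedness of $\mathcal{F}$ to conclude $B \in \mathcal{F}$. Finally, $v \notin A$ and $B \subseteq A$ force $v \notin B$, so $B$ lies in the first part of the union defining $\delp{F}{v}$.

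The main obstacle, though minor, is the bookkeeping of which ground set is in play at each step: the condition ``$A \neq$ ground set'' is with respect to the new ground set $U \setminus \{v\}$, while the ideal property of $\mathcal{F}$ requires ``$A \neq U$'' with respect to the original ground set. Bridging these two conditions is exactly the small argument above ($v \notin A$ implies $A \neq U$), and it is what makes the definition of $\mathcal{F}.\mathrm{del}'$ work, as opposed to naive deletion. No additional machinery is needed.
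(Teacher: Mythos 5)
Your proof is correct and is exactly the direct verification the paper has in mind: the paper gives no explicit proof of Lemma~\ref{lemma:deletion}, remarking only that ``it is easy to verify that this family is an ideal family.'' Your key bridging step ($v \notin A$ implies $A \neq U$, so the downward-closedness of $\mathcal{F}$ applies) is the right observation, and the standing assumption $|U| \geq 2$ from that subsection guarantees the new ground set $U \setminus \{v\}$ is nonempty.
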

    
\paragraph{\emph{Contraction minor}: $\cont{F}{v}$ \\[1mm]}
For a set family $\mathcal{F}$ and $v\in U$, the contraction of $\mathcal{F}$ by $v$ is the collection of  all hyperedges containing $v$ in $\mathcal{F}$ and remove $v$ from each hyperedge, i.e., 
$$\cont{F}{v} = \{ H \setminus \{v\} \mid v \in H, H \in \mathcal{F} \}.$$
When $\mathcal{F}$ is an ideal family, $\cont{F}{v}$ is an ideal family under the condition that $\{v\}$ is a hyperedge.

\begin{lemma}
For an ideal family $\mathcal{F}$ and $v\in U$ with a hyperedge $\{v\}$,  the contraction $\cont{F}{v}$ of $v$ is an ideal family on the ground set $U \setminus \{v\}$.
\end{lemma}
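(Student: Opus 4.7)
The plan is to verify the three defining conditions of an ideal family for $\cont{F}{v}$ on the ground set $U' := U \setminus \{v\}$.

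First, I would dispatch the two ``contains'' conditions. The empty set lies in $\cont{F}{v}$ because the hypothesis $\{v\}\in\mathcal{F}$ gives $\{v\}\setminus\{v\}=\emptyset$ as a member. The new ground set $U' = U\setminus\{v\}$ lies in $\cont{F}{v}$ because $U\in\mathcal{F}$ (since $\mathcal{F}$ is ideal) and $v\in U$, so $U\setminus\{v\}\in\cont{F}{v}$ by definition.

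The substantive step is the downward-closed-except-$U'$ condition. Let $A\in\cont{F}{v}$ with $A\neq U'$, and let $B\subseteq A$; I must show $B\in\cont{F}{v}$. By definition of contraction, $A = H\setminus\{v\}$ for some $H\in\mathcal{F}$ with $v\in H$. The key observation is that $H\neq U$: otherwise $A = U\setminus\{v\} = U'$, contradicting $A\neq U'$. Now consider $B\cup\{v\}$, which satisfies $B\cup\{v\}\subseteq H$. Since $H\in\mathcal{F}$, $H\neq U$, and $\mathcal{F}$ is downward-closed except at $U$, we conclude $B\cup\{v\}\in\mathcal{F}$. Because $v\in B\cup\{v\}$, the image $(B\cup\{v\})\setminus\{v\} = B$ belongs to $\cont{F}{v}$, as required.

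The only subtle point, which I would expect to be the main (minor) obstacle, is correctly handling the ``except $U$'' clause twice: once to ensure the witness $H$ for $A$ can be chosen to be a proper subset of $U$ (using $A\neq U'$), and once to apply downward-closedness of $\mathcal{F}$ to conclude $B\cup\{v\}\in\mathcal{F}$. Both uses are legitimate because $H\neq U$ forces every subset of $H$ to be a non-ground-set hyperedge. The role of the hypothesis $\{v\}\in\mathcal{F}$ is isolated to guaranteeing $\emptyset\in\cont{F}{v}$; without it, the smallest member would be $H_0\setminus\{v\}$ for the $\subseteq$-minimal $H_0\ni v$, which need not be empty.
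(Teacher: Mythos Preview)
Your proof is correct and follows the same direct-verification approach as the paper; the paper's proof is a single line that only spells out the empty-set condition (``Since $\{v\}$ is a hyperedge, the contraction minor has the empty set''), leaving the ground-set and downward-closedness checks implicit, whereas you carry out all three in full. Your handling of the ``except $U$'' clause via $H = A \cup \{v\} \neq U$ when $A \neq U'$ is exactly the detail one would need to fill in.
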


\begin{proof}
Since $\{v\}$ is a hyperedge, the contraction minor has the empty set. 
\end{proof}

The following lemma characterizes whether $\{v\}$ is a hyperedge or not, which will be used in the proof of the main theorem.

\begin{lemma}\label{lem:degone}
For an ideal family with $|U|\geq 2$, $\{v\}$ is not a hyperedge if and only if $\mbox{deg}_{\mathcal{F}} v=1$. 
\end{lemma}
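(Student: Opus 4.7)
The plan is to prove both directions directly from the definition of an ideal family, leveraging the fact that $U \in \mathcal{F}$ and that downward-closedness applies to every hyperedge other than $U$. The assumption $|U| \geq 2$ will be used precisely to ensure that $\{v\}$ and $U$ are distinct, so that the singleton $\{v\}$ being a hyperedge contributes a second incidence with $v$ beyond $U$ itself.

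First I would establish the ``only if'' direction by contrapositive. Suppose $\{v\}$ is a hyperedge. Since $|U|\ge 2$ we have $\{v\}\neq U$, while $U$ is also a hyperedge containing $v$. These are two distinct hyperedges containing $v$, so $\deg_{\mathcal{F}} v \ge 2$, contradicting $\deg_{\mathcal{F}} v = 1$. Hence if $\deg_{\mathcal{F}} v = 1$, then $\{v\}$ cannot be a hyperedge.

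Next I would establish the ``if'' direction, again contrapositively: assume $\{v\}$ is not a hyperedge and conclude $\deg_{\mathcal{F}} v = 1$. Since $U\in\mathcal{F}$ and $v\in U$, clearly $\deg_{\mathcal{F}} v \ge 1$. Suppose some hyperedge $H$ with $v\in H$ satisfies $H\neq U$. Because $\mathcal{F}$ is downward-closed except at $U$, every subset of $H$ is a hyperedge; in particular $\{v\}\subseteq H$ gives $\{v\}\in\mathcal{F}$, contradicting the assumption. Therefore the only hyperedge containing $v$ is $U$ itself, so $\deg_{\mathcal{F}} v = 1$.

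There is no real obstacle here; the whole argument is a one-line application of downward-closedness in each direction. The only subtlety to flag carefully is the role of $|U|\ge 2$: without it, $\{v\}=U$ would be possible and the ``only if'' direction would fail (the singleton ground set gives $\deg_{\mathcal{F}} v = 1$ with $\{v\}$ a hyperedge). I would make sure to state that hypothesis explicitly at the step where $\{v\}\neq U$ is invoked.
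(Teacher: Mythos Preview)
Your argument is correct and matches the paper's approach: both rely on $U\in\mathcal{F}$ together with downward-closedness (applied to any $H\neq U$ containing $v$) and use $|U|\ge 2$ exactly to ensure $\{v\}\neq U$. One cosmetic point: you have swapped the labels ``if'' and ``only if'' (your first paragraph proves the $\Leftarrow$ direction and your second proves $\Rightarrow$ directly, not contrapositively), but the mathematics is fine.
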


\begin{proof}
Recall that the ground set is always contained in the ideal family.
When deg $v=1$, the ground set is the unique hyperedge which contains $v$.
\end{proof}


\paragraph{\emph{Trace minor}: $\trace{F}{v}$ \\[1mm]}

For a set family $\mathcal{F}$ and $v\in U$, the trace $\trace{F}{v}$ of $\mathcal{F}$ by $v$ is the set family with hyperedges $\{ H \setminus \{v\} \mid H \in \mathcal{F} \}$. It is easy to show that the trace of an ideal family is always an ideal family.

\begin{lemma}\label{lemma:trace}
For an ideal family $\mathcal{F}$ and $v\in U$,  the trace $\trace{F}{v}$ by $v$ is an ideal family on the ground set $U \setminus \{v\}$.
\end{lemma}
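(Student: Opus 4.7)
The plan is to verify the three defining conditions of an ideal family directly for $\trace{F}{v}$ on the ground set $U \setminus \{v\}$. Conditions (1) and (2) are immediate: since $\emptyset \in \mathcal{F}$ we have $\emptyset = \emptyset \setminus \{v\} \in \trace{F}{v}$, and since $U \in \mathcal{F}$ we have $U \setminus \{v\} \in \trace{F}{v}$. The only substantive content is the downward-closed-except-ground-set property (3).

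For (3), I would fix $A \in \trace{F}{v}$ with $A \neq U \setminus \{v\}$ and $B \subseteq A$, and produce a preimage in $\mathcal{F}$ whose trace equals $B$. First note that every element of $\trace{F}{v}$ is automatically a subset of $U \setminus \{v\}$, so $v \notin A$ and hence $v \notin B$. Choose $A' \in \mathcal{F}$ with $A = A' \setminus \{v\}$; since $A \neq U \setminus \{v\}$, we must have $A' \neq U$.

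I would then split on whether $v \in A'$. If $v \in A'$, set $H = B \cup \{v\}$; then $H \subseteq A' \setminus \{v\} \cup \{v\} = A'$, and since $A' \in \mathcal{F}$ with $A' \neq U$, the downward-closed property of $\mathcal{F}$ gives $H \in \mathcal{F}$, so $B = H \setminus \{v\} \in \trace{F}{v}$. If $v \notin A'$, then $A' = A \neq U$ (since $v \in U$ but $v \notin A'$), so directly $B \subseteq A' \in \mathcal{F}$ and downward closure yields $B \in \mathcal{F}$, hence $B = B \setminus \{v\} \in \trace{F}{v}$.

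There is no real obstacle here: the only subtlety is noticing that $A \neq U \setminus \{v\}$ forces the chosen preimage $A'$ to be different from $U$, which is precisely the hypothesis needed to apply the downward-closed property of $\mathcal{F}$. The rest is bookkeeping with the two cases $v \in A'$ and $v \notin A'$.
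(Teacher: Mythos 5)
Your proof is correct. The paper itself gives no explicit argument for this lemma—it simply asserts that ``it is easy to show that the trace of an ideal family is always an ideal family''—so your direct verification of the three defining conditions is a reasonable way to fill that gap. The key observation, that $A \neq U\setminus\{v\}$ forces any preimage $A'$ with $A' \setminus \{v\} = A$ to satisfy $A' \neq U$, is exactly what is needed.

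One small simplification: the case split on whether $v \in A'$ is unnecessary. Once you have $A' \in \mathcal{F}$ with $A' \neq U$ and $A = A' \setminus \{v\}$, simply note that $B \subseteq A = A' \setminus \{v\} \subseteq A'$; downward closure of $\mathcal{F}$ then gives $B \in \mathcal{F}$ directly, and since $v \notin B$ we have $B = B \setminus \{v\} \in \trace{F}{v}$. There is no need to construct the auxiliary preimage $H = B \cup \{v\}$ in the case $v \in A'$; the element $B$ itself is already a preimage of $B$ under the trace map.
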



The trace minor is important in the discussions of intersection-closed families and ideal families, but will not appear explicitly in the proofs in this paper.
(When the degree of $v$ is 1, the trace minor coincides with the deletion for an ideal family, and such a case will appear in the proof of the main theorem.)


\section{Examples}

To make the theoretical concepts more concrete, we provide examples of ideal families and calculate their normalized degree sums.

\subsection{Ground set with two vertices}
Let $U = \{v_1, v_2\}$. Consider the ideal family $\mathcal{F}$ defined as:

\[
\mathcal{F} = \{\emptyset, \{v_1\}, \{v_2\}, \{v_1, v_2\}\}.
\]

This family includes all subsets of $U$, satisfying the properties of an ideal family.
We have $|U|=2$ and $|\mathcal{F}|=4$.

The degrees are calculated as follows:
\[
\deg_{\mathcal{F}}(v_1) = 2 \quad \quad (\{v_1\}, \{v_1, v_2\}),
\]
\[
\deg_{\mathcal{F}}(v_2) = 2 \quad \quad (\{v_2\}, \{v_1, v_2\}).
\]

The total size of hyperedges is:
\[
\text{TSH}(\mathcal{F}) = \sum_{v \in U} \deg_{\mathcal{F}}(v) = 2 + 2 = 4.
\]


Hence, the normalized degree sum is:
\[
\text{NDS}(\mathcal{F}) = 2 \text{TSH}(\mathcal{F}) - |U|\cdot |\mathcal{F}|=  2 \cdot 4 - 2 \cdot 4 = 8 - 8 = 0.
\]

Therefore, $\mathcal{F}$ meets the average rarity condition. It is easy to verify that the normalized degree sum of the power set of a finite set is always 0.

All other ideal families with the ground set of size 2 also have non-positive NDS. 

\subsection{Ground set with three vertices}
Let $U = \{v_1, v_2, v_3\}$. Define the ideal family $\mathcal{F}$ as:

\[
\mathcal{F} = \{\emptyset, \{v_1\}, \{v_2\}, \{v_3\}, \{v_1, v_2\}, \{v_1, v_3\}, \{v_1, v_2, v_3\}\}.
\]
We have $|U|=3$ and $|\mathcal{F}|=7$.

The degrees are calculated as follows:
\begin{align*}
&\deg_{\mathcal{F}}(v_1) = 4 \qquad (\{v_1\}, \{v_1, v_2\}, \{v_1, v_3\}, \{v_1, v_2, v_3\}),
\\
&\deg_{\mathcal{F}}(v_2) = 3 \qquad (\{v_2\}, \{v_1, v_2\}, \{v_1, v_2, v_3\}),
\\
&\deg_{\mathcal{F}}(v_3) = 3 \qquad (\{v_3\}, \{v_1, v_3\}, \{v_1, v_2, v_3\}).
\end{align*}

The total size of hyperedges is:
\[
\text{TSH}(\mathcal{F}) = \sum_{v \in U} \deg_{\mathcal{F}}(v) = 4 + 3 + 3 = 10.
\]


Hence, the normalized degree sum is: 
\[
\text{NDS}(\mathcal{F}) = 2 \text{TSH}(\mathcal{F}) - |U|\cdot |\mathcal{F}|= 2 \cdot 10 - 3 \cdot 7 = 20 - 21 = -1.
\]

Thus, $\mathcal{F}$ satisfies the average rarity condition.

\subsection{Ideal family with a vertex of degree 1 and a large hyperedge}\label{subsec:degreeone}
We consider an ideal family with a vertex $v$ of degree 1 and the hyperedge $U\backslash \{v\}$. For an ideal family, $\{v\}$ is not a hyperedge when $v$ has degree 1 (Lemma \ref{lem:degone}).
The hyperedges of this ideal family are given by \( \{ H \mid v \notin H \} \cup \{ U \} \).
This ideal family appears in one of the cases in the proof of the main theorem.

Let $|U| = n$. 
The number of hyperedges in this family is:
\[
|\mathcal{F}| = 2^{n-1} + 1.
\]

The total sum of the sizes of the hyperedges is calculated as:
\[
\text{TSH}(\mathcal{F}) = \bigg(\sum_{i=0}^{n-1} \binom{n-1}{i}\cdot i \bigg) + n
= (n-1) \cdot 2^{n-2} + n.
\]

The normalized degree sum is calculated as:
\[
\text{NDS}(\mathcal{F})=2 \text{TSH}(\mathcal{F}) - |U|\cdot |\mathcal{F}| 
= 2 ( (n-1)\cdot 2^{n-2}+n ) - n\cdot (2^{n-1}+1) =
n - 2^{n-1}.
\]

This value is non-positive for any natural number $n\geq 1$, hence the family satisfies the average rarity condition.

\section{The average rarity of ideal families} \label{sec:main}


The following is our main theorem.

\begin{theorem}\label{theorem:main}
For any ideal family $\mathcal{F}$ on a nonempty finite ground set $U$, the normalized degree sum is non-positive:

\[
\text{NDS}(\mathcal{F}) \leq 0.
\]

Consequently, all ideal families satisfy the average rarity condition.
\end{theorem}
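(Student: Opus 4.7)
The plan is to prove the theorem by induction on $|U|$. The base case $|U|=1$ forces $\mathcal{F}=\{\emptyset,U\}$, for which $\text{NDS}(\mathcal{F})=0$ by direct computation. For the inductive step with $|U|\geq 2$, the engine is the identity
\[
\text{NDS}(\mathcal{F}) = \text{NDS}(\mathcal{F}_{-v}) + \text{NDS}(\cont{F}{v}) + 2\deg_{\mathcal{F}}(v) - |\mathcal{F}|,
\]
obtained by splitting $\text{TSH}(\mathcal{F})$ according to whether a chosen vertex $v\in U$ lies in a hyperedge, where $\mathcal{F}_{-v}=\{H\in\mathcal{F}\mid v\notin H\}$ and both $\text{NDS}$'s on the right are taken on $U\setminus\{v\}$. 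The aim is to pick $v$ so that the first two terms are bounded by the inductive hypothesis and the last by rarity.

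I would split into two cases according to whether $\mathcal{F}$ contains a vertex of degree~$1$. Suppose first that some $v$ has $\deg_{\mathcal{F}}(v)=1$; by Lemma~\ref{lem:degone} the only hyperedge containing $v$ is $U$ itself, so every other hyperedge is a subset of $U\setminus\{v\}$. If $U\setminus\{v\}\in\mathcal{F}$, then downward-closure (except $U$) forces $\mathcal{F}=2^{U\setminus\{v\}}\cup\{U\}$, and the explicit calculation of Section~\ref{subsec:degreeone} gives $\text{NDS}(\mathcal{F})=|U|-2^{|U|-1}\leq 0$. Otherwise $\delp{F}{v}$ is obtained from $\mathcal{F}$ by replacing $U$ with $U\setminus\{v\}$, so $|\delp{F}{v}|=|\mathcal{F}|$ and $\text{TSH}(\delp{F}{v})=\text{TSH}(\mathcal{F})-1$, whence the inductive hypothesis yields $\text{NDS}(\mathcal{F})\leq 2-|\mathcal{F}|\leq 0$ since $|\mathcal{F}|\geq 2$.

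In the remaining case every vertex has degree at least~$2$, and Lemma~\ref{lemma:idealrare} supplies a rare vertex $v$. By Lemma~\ref{lem:degone}, $\{v\}\in\mathcal{F}$, so $\cont{F}{v}$ is ideal on $U\setminus\{v\}$ and $\text{NDS}(\cont{F}{v})\leq 0$ by induction. If $U\setminus\{v\}\in\mathcal{F}$ as well, then $\mathcal{F}_{-v}=\delp{F}{v}$ is itself ideal, the first two terms of the identity are nonpositive, and rarity $2\deg_{\mathcal{F}}(v)\leq|\mathcal{F}|$ finishes the bound. If instead $U\setminus\{v\}\notin\mathcal{F}$, a direct computation gives $\text{NDS}(\delp{F}{v})=\text{NDS}(\mathcal{F}_{-v})+(|U|-1)$, so the inductive hypothesis upgrades to $\text{NDS}(\mathcal{F}_{-v})\leq -(|U|-1)$, and combined with rarity this produces $\text{NDS}(\mathcal{F})\leq -(|U|-1)\leq 0$.

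The main obstacle is that $\mathcal{F}_{-v}$ is not in general ideal on $U\setminus\{v\}$: when $U\setminus\{v\}\notin\mathcal{F}$, the subfamily appearing in the identity fails to satisfy the ground-set condition and must be compared by hand to the ideal family $\delp{F}{v}$, at the price of an additive correction of $|U|-1$ in $\text{NDS}$. Verifying in each branch that this correction is absorbed, either by rarity or by a cardinality lower bound such as $|\mathcal{F}|\geq 2$, is the delicate bookkeeping at the heart of the argument.
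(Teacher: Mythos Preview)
Your proposal is correct and follows essentially the same route as the paper's own proof: induction on $|U|$, the decomposition identity $\text{NDS}(\mathcal{F}) = \text{NDS}(\mathcal{F}_{-v}) + \text{NDS}(\cont{F}{v}) + 2\deg_{\mathcal{F}}(v) - |\mathcal{F}|$, the fourfold case split on whether $\deg_{\mathcal F}(v)=1$ and whether $U\setminus\{v\}\in\mathcal{F}$, the appeal to Lemma~\ref{lemma:idealrare} for a rare vertex, and the explicit computation of Section~\ref{subsec:degreeone} in the degenerate branch. The only cosmetic difference is that you first test whether \emph{some} vertex has degree~$1$ and use it, whereas the paper first selects a rare vertex and then inspects its degree; since a degree-$1$ vertex is automatically rare, the two case splits cover the same ground.
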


\begin{proof}
We prove the theorem using induction on the size of the ground set $|U|$.
\medskip


The base case is when $|U|=1$.
In this case, the hyperedges in the ideal family are $\emptyset$ and $U$. This yields that the normalized degree sum is zero.
\medskip

For the inductive step, assume that the theorem holds for any ideal family with a ground set of size $n-1$.
We verify the statement for an ideal family $\mathcal{F}$ on $U$ with $|U|=n\ge 2$.




By Lemma \ref{lemma:idealrare} there exists at least one rare vertex in $\mathcal{F}$, and choose one arbitrary rare vertex $v$.
%
Since $v$ is rare, we have:

\[
2  \deg_{\mathcal{F}}(v) - |\mathcal{F}|  \leq 0.
\]

\medskip


\noindent
\textbf{[The case $\deg_{\mathcal{F}}(v)=1$]} \\
First, we consider the case with $\deg_{\mathcal{F}}(v) = 1$.
The next lemma follows from Lemma \ref{lem:degone}.
\begin{lemma}
For an ideal family $\mathcal{F}$ with $|U| \geq 2$ and $v\in U$, $\{v\}$ is a hyperedge if and only if deg $v\geq 2$. 
\end{lemma}


When $U\setminus \{v\}$ is a hyperedge in $\mathcal{F}$, the calculation in Subsection~\ref{subsec:degreeone} shows that ${\rm NDS}(\mathcal{F})\le 0$ and we are done.

When $U\setminus \{v\}$ is not a hyperedge in $\mathcal F$, 
by the definition of the deletion operation,
\[\text{TSH}(\mathcal{F}) =\text{TSH}(\delp{F}{v})+ 1,\]and
\[|\mathcal{F}| =|\delp{F}{v}|.\]


By Lemma~\ref{lemma:deletion}, $\delp{F}{v}$  is an ideal family on $U\setminus \{v\}$.
Therefore, 
$$\text{NDS}(\delp{F}{v})= 2\text{TSH}(\delp{F}{v}) - (n-1)|\delp{F}{v}| \leq 0$$ by the induction hypothesis.
Hence, we have
\begin{eqnarray*}
\text{NDS}(\mathcal{F}) &=& 2\text{TSH}(\mathcal{F})-n|\mathcal{F}|\\
&=&2(\text{TSH}(\delp{F}{v})+ 1)-n|\delp{F}{v}|\\
&=& 2\text{TSH}(\delp{F}{v}) - (n-1)|\delp{F}{v}| + 2 - |\delp{F}{v}|\\
&=& \text{NDS}(\delp{F}{v}) + (2 - |\delp{F}{v}|)\\
&\leq& 0.
\end{eqnarray*}
%
Note that $|\delp{F}{v}|\geq 2$ holds because it contains the empty set and the ground set.

We conclude that ${\rm NDS}(\mathcal{F})\le 0$ in the case $\deg_{\mathcal{F}}(v)=1$.


\bigskip
\noindent
\textbf{[The case $\deg_{\mathcal{F}}(v)\ge 2$]}

We consider the case 
$\deg_{\mathcal{F}}(v)\ge 2$, i.e., the case that
$\{v\}$ is a hyperedge.

Since $\delp{F}{v}$ and $\cont{F}{v}$ are ideal families by Lemmas~\ref{lemma:deletion} and \ref{lemma:trace}, they satisfy the average rarity condition:
\[
\text{NDS}(\delp{F}{v}) \leq 0, \quad \text{NDS}(\cont{F}{v}) \leq 0.
\]

By definitions, we have the following.

\[\text{NDS}(\cont{F}{v}) = 2\text{TSH}(\cont{F}{v})-(n-1)|\cont{F}{v}|,\]
\[\text{NDS}(\delp{F}{v}) = 2\text{TSH}(\delp{F}{v})-(n-1)|\delp{F}{v}|,\]
\[|\mathcal{F}| =|\cont{F}{v}|+|\del{F}{v}|,\]
\[\text{TSH}(\mathcal{F}) =\text{TSH}(\cont{F}{v})+\text{TSH}(\del{F}{v})+\text{deg}_{\mathcal F} v.\]

We divide the cases depending on whether $U\setminus\{v\}$ is a hyperedge of $\mathcal{F}$ or not.
\begin{itemize}
\item In the case that $U\setminus \{v\}$ is a hyperedge of $\mathcal{F}$, we have
\[|\del{F}{v}|=|\delp{F}{v}|,\]
and
\[\text{TSH}(\del{F}{v})=\text{TSH}(\delp{F}{v}).\]

The statement is deduced as follows.

\begin{eqnarray*}
&&\text{NDS}({\mathcal F}) \\
&=& 2\text{TSH}({\mathcal F}) - n|\mathcal{F}|\\
&=& 2(\text{TSH}(\cont{F}{v})+\text{TSH}(\del{F}{v})+\text{deg}_{\mathcal F}) - n(|\cont{F}{v}|+|\del{F}{v}|)\\
&=& (2\text{TSH}(\cont{F}{v})-(n-1)|\cont{F}{v}|)+(2\text{TSH}(\del{F}{v})-(n-1)|\del{F}{v}|)\\
&&+  2  \deg_{\mathcal{F}}(v) -(|\cont{F}{v}|+|\del{F}{v}|) \\
&=& \text{NDS}(\delp{F}{v}) + \text{NDS}(\cont{F}{v}) + 2 \deg_{\mathcal{F}}(v) - |\mathcal{F}|\\
&\leq& 0.
\end{eqnarray*}

The last inequality follows from $2 \deg_{\mathcal{F}}(v) - |\mathcal{F}|\leq 0$.


\item If $U\setminus \{v\}$ is not a hyperedge of $\mathcal{F}$, we have
\[|\del{F}{v}|=|\delp{F}{v}| - 1,\]
and
\[\text{TSH}(\del{F}{v})=\text{TSH}(\delp{F}{v}) - n + 1.\]
Hence,
\begin{eqnarray*}
\text{NDS}(\del{F}{v}) &=& 2{\rm TSH}(\del{F}{v})-(n-1)|\del{F}{v}|\\
&=&2\text{TSH}(\delp{F}{v})-(n-1)|\delp{F}{v}| -2n+2+(n-1)\\
&=&\text{NDS}(\delp{F}{v})-n+1.
\end{eqnarray*}

The statement is verified as follows.
\begin{eqnarray*}
&&\text{NDS}({\mathcal F}) \\
&=& (2\text{TSH}(\cont{F}{v})-(n-1)|\cont{F}{v}|)+(2\text{TSH}(\del{F}{v})-(n-1)|\del{F}{v}|)\\
&&+  2  \deg_{\mathcal{F}}(v) -(|\cont{F}{v}|+|\del{F}{v}|) \\
&=&(2\text{TSH}(\cont{F}{v})-(n-1)|\cont{F}{v}|)\\
&&+(2(\text{TSH}(\delp{F}{v})-n+1)-(n-1)(|\delp{F}{v}|-1)\\
&&+  2  \deg_{\mathcal{F}}(v) -(|\cont{F}{v}|+|\del{F}{v}|)\\
&=& \text{NDS}(\delp{F}{v}) + \text{NDS}(\cont{F}{v}) + (2 \deg_{\mathcal{F}}(v) - |\mathcal{F}|)-n+1\\
&\leq& 0.
\end{eqnarray*}

Recall that $2 \deg_{\mathcal{F}}(v) - |\mathcal{F}|\leq 0$ and $n \geq 2$.

\end{itemize}

\end{proof}

\noindent
\textit{Remark}: 
The proof relies on $\mathcal{F}$ being an ideal family to guarantee the existence of a rare vertex $v$.
In addition, it is used to ensure that the contraction $\cont{F}{v}$ includes the empty set.
For a general intersection-closed family, it may not contain the empty set.
In the base case, ${\rm NDS}\le 0$ is not guaranteed if the empty set is not contained in the family.

\section{Formalization in Lean 4} \label{sec:lean4}

In this section, we present a formalized proof of the main theorem in Lean 4. 
The code shown here is slightly simplified from the full implementation to enhance clarity, such as by excluding conversions between natural numbers and integers.
The complete implementation can be found in the repository~\cite{K2024GitHUB}.

\subsection{Basic Definitions in Lean 4}

Below, we outline the core definitions employed in our Lean 4 formalization.

\begin{lstlisting}[caption={Definition of SetFamily and IdealFamily in Lean 4}]
-- Definition of set families
structure SetFamily (§$\alpha$§ : Type) :=
  (ground : Finset §$\alpha$§)
  (sets : Finset §$\alpha$§ §$\rightarrow$§ Prop)
  (inc_ground : §$\forall$§ s, sets s §$\rightarrow$§ s §$\subseteq$§ ground)
  (nonempty_ground : ground.Nonempty)

-- Definition of intersection-closed families
def isIntersectionClosedFamily  (F : SetFamily §$\alpha$§) : Prop :=
  §$\forall$§ {s t : Finset §$\alpha$§}, F.sets s §$\rightarrow$§ F.sets t §$\rightarrow$§ F.sets (s §$\cap$§ t)

-- Definition of rare vertices
def is_rare (F : SetFamily §$\alpha$§) (v : §$\alpha$§) : Prop :=
  2 * F.degree v - F.number_of_hyperedges §$\leq$§ 0

-- Ideal families
structure IdealFamily  (§$\alpha$§ : Type) extends SetFamily §$\alpha$§ :=
  (has_empty : sets §$\emptyset$§)
  (has_ground : sets ground)
  (downward_closed : §$\forall$§ (A B : Finset §$\alpha$§), sets B §$\rightarrow$§ B §$\neq$§ ground §$\rightarrow$§ A §$\subseteq$§ B §$\rightarrow$§ sets A)

-- Total size of hyperedges
def SetFamily.total_size_of_hyperedges (F : SetFamily §$\alpha$§)   : §$\mathbf Z$§ :=
   ((Finset.powerset F.ground).filter F.sets).sum Finset.card

-- Number of hyperedges
def SetFamily.number_of_hyperedges  (F : SetFamily §$\alpha$§): §$\mathbf Z$§ :=
   ((Finset.powerset F.ground).filter F.sets).card

-- Normalized degree sum
def SetFamily.normalized_degree_sum (F : SetFamily §$\alpha$§) :=
  2 * F.total_size_of_hyperedges - F.number_of_hyperedges*F.ground.card
\end{lstlisting}

In this formalization:
 `\verb+SetFamily+' represents a general set family on  finite type `$\alpha$'.
 `\verb+IdealFamily+' extends `\verb+SetFamily+' with the specific properties of an ideal family. \\ 
 `\verb+downward_closed+' is a condition that hyperedges are closed with respect to taking subsets. 
`\verb+Set.card+' means the cardinality of the set.
The ground set $U$ of a set family `\verb+F+' is written as `\verb+F.ground+'.


The Lean 4 code below states Frankl's conjecture,
with `\verb+sorry+' indicating that the proof remains incomplete.

\begin{lstlisting}[caption={Frankl's conjecture}]
theorem frankl_conjecture : 
  §$\forall$§  (F : SetFamily §$\alpha$§):
    has_empty  F §$\rightarrow$§
    has_univ F §$\rightarrow$§
    is_closed_under_intersection F §$\rightarrow$§
    §$\exists (v \in$§ F.ground), 2 * F.degree v §$\leq$§ F.number_of_hyperedges := sorry
\end{lstlisting}

Here, \verb+F.degree v+ indicates the degree of a set family $\mathcal F$ at $v$.

\subsection{Basic structure of the proof of the main theorem}



\begin{lstlisting}[caption={Concept proof of the main theorem in Lean 4}]
theorem ideal_average_rarity (F : IdealFamily §$\alpha$§):
  F.normalized_degree_sum §$\leq 0 $§:= by
  -- Induction on the size of the ground set
  induction F.ground.card with
  | one =>
      exact nds_nonposi_card_one F
  | succ ih => -- ih is induction hypothesis.
    -- v is a rare vertex, and rv provides the evidence for it.
    obtain §$\langle v, rv \rangle$§ := ideal_version_of_frankl_conjecture F

    have geq2: F.ground.card §$ \geq 2 $§ := by sorry  
    -- proof for this is omitted here.

    by_cases h_v : F.sets {v}
    case pos =>
      -- Now consider whether (F.ground \ {v}) is a hyperedge
      by_cases h_uv : F.sets (F.ground \ {v})
      case pos =>
        -- If (U\{v}) is a hyperedge
        exact case_hs_haveUV F v h_v rv geq2 h_uv ih
      case neg =>
        -- If (U\{v}) is not a hyperedge
        exact case_hs_noneUV F v h_v rv geq2 h_uv ih
    case neg =>
      by_cases h_uv : F.sets (F.ground \ {v})
      case pos =>
        -- If (U\{v}) is a hyperedge
        exact case_degone_haveUV F v rv geq2 h_v h_uv
      case neg =>
        exact case_degone_noneUV F v rv geq2 h_v h_uv ih
\end{lstlisting}

The code above offers a conceptual overview rather than executable Lean 4 code. Still, readers versed in Lean 4 syntax can view it as a pseudocode capturing the proof's overall structure.

In this code:
The sentences followed by `\verb+--+' are comments.
 In theorem \\ 
 `\verb+ideal_average_rarity+ \verb+(F : IdealFamily+ $\alpha$\verb+)+ :
  \verb+normalized_degree_sum+ F $\leq$ 0', the part of `\verb+(F : IdealFamily+ $\alpha$\verb+)+' is an assumption, and the part of `\verb+normalized_degree_sum  F + $\leq$ {\tt 0}' is the goal to be proved. Using tactics in Lean 4, we can transform the goal step by step and then complete the proof by demonstrating that the goal follows from the assumptions, for example, using \verb+exact+.
The part following by `\verb+:=+' is the proof of the theorem.

The proof proceeds by induction of `\verb+F.ground+', the cardinality of the ground set of the ideal family.
For the base case, we handle the situation where the ground set is of size one.

In the inductive step, we select a vertex $v$ as a rare vertex by using theorem \\ 
We define the deletion and contraction families `$\delp{F}{v}$' and `$\cont{F}{v}$'.
Under the appropriate assumptions, it is clear that these are ideal families. Since these minors have a smaller ground set, we can apply the induction hypothesis to both minors. 
Since the deletion minors and contraction minors differ depending on whether $\{v\}$ or $\text{F.ground} \setminus \{v\}$ is a hyperedge, it is necessary to consider separate cases.
To resolve subcases, we use lemma `\verb+case_hs_haveUV+' and others. 
The part following from the lemma name corresponds to the argument and is assumed to hold.

In the proof, \verb+cases+ represents case analysis. `\verb+case pos =>+' indicates the case where the condition holds, while `\verb+case neg =>+' represents the case where the condition does not hold.
`\verb+have label:statement+' is a mini lemma used in the proof.
`\verb+sorry+' represents  the omission of a proof.

The next code is a concise version of the statement of a key lemma when $U\setminus \{v\}$ and $\{v\}$ are both hyperedges. The part before the outer colon is the assumption of this lemma, and the part after the outer colon is the goal of this lemma.

The following lemma in Lean 4 corresponds to
$$\text{NDS}({\mathcal F}) = \text{NDS}(\del{F}{v}) + \text{NDS}(\cont{F}{v}) + 2 \deg_{\mathcal{F}}(v) - |\mathcal{F}|.$$

\begin{lstlisting}[caption={Concept statement of a key lemma in Lean 4}]
lemma nds_set_minors (F : IdealFamily §$\alpha$§)  (v : §$\alpha$§) (hv : v §$\in$§ F.ground) (geq2: F.ground.card §$\geq$§ 2)
 (hs : F.sets {v}):
  F.toSetFamily.normalized_degree_sum =
  (F.toSetFamily.deletion v hv geq2).normalized_degree_sum +
  (F.toSetFamily.contraction v hv geq2).normalized_degree_sum
  + 2 * (F.degree v) - F.number_of_hyperedges :=
\end{lstlisting}

The deletion $\delp{F}{v}$ for set families corresponds to  
``\verb+F.toSetFamily.deletion' v hv geq2+'' in Lean 4 code. The deletion $\delp{F}{v}$ for ideal families corresponds to `\verb+F.deletion v hv geq2+'.
The following lemma corresponds to $\text{NDS}(\del{F}{v}) =\text{NDS}(\delp{F}{v})-n+1$ under the assumption that $\verb +F.ground+ {\tt \setminus \{v\}}$ is not a hyperedge.

\begin{lstlisting}[caption={Relation between deletion and ideal deletion in Lean 4}]
lemma nds_deletion_noneuv (F : IdealFamily §$\alpha$§)  (v : §$\alpha$§) (hv : v §$\in$§ F.ground) (geq2: F.ground.card §$\geq$§ 2)
   (h_uv : §$\neg$§F.sets (F.ground \ {v})) :
  (F.deletion' v hv geq2).normalized_degree_sum = (F.toSetFamily.deletion v hv geq2).normalized_degree_sum + F.ground.card - 1 :=
\end{lstlisting}

We connect the normalized degree sums through the relationships defined earlier.
Then, by applying the Lean 4 tactic `\verb+linarith+' to resolve the linear inequalities, we establish that  `\verb +normalized_degree_sum+ {\tt F $\leq$ 0}'.
  

The full formal proof of our result by Lean 4 is published in our GitHub repository \cite{K2024GitHUB}:
\url{https://github.com/kashiwabarakenji/}

\section{Concluding remarks}

We have shown that every ideal family has a non-positive normalized degree sum, meaning it satisfies the average rarity condition, using induction on the size of the ground set.
This proof offers insights into understanding how intersection-closed families will meet the average rarity condition.
Future research will explore the applicability of the average rarity condition to broader classes of intersection-closed families
and further developments related to Frankl’s conjecture.

We also formalized the proof in Lean 4. 
As more theorems in this field are formalized in Lean 4 and made publicly available, 
this could enable AI to learn from them, enhancing its capabilities in combinatorial research.

By assuming that a set family has the ground set as one of its hyperedges, a set family closed under intersection can be represented as a closure system. Consequently, the set family can be expressed in terms of rooted circuits. The approach of considering Frankl's conjecture in the context of rooted circuits will be explored in a separate paper in the future.

This paper establishes that ideal families are average rare by using a lemma ensuring a rare vertex exists.
Generally, exploring the average rarity proves advantageous because it implies the existence of a rare vertex. 
However, the approach taken in this paper is in a sense reversed: 
we deduced the average rarity of ideal families from the existence of a rare vertex.
Whether average rarity can be established independently of the existence of a rare vertex remains unresolved.

\bigskip

\subsection*{Funding Statement}
The authors have no relevant financial or non-financial interests to disclose.

\subsection*{Data Availability Statement}
All formalized proofs and related Lean 4 source code used in this article are publicly available at the following GitHub repository
URL and can be freely accessed and verified. 
\\[1mm]
\url{https://github.com/kashiwabarakenji/frankl_lean}
\\[1mm]
No proprietary or confidential data were used.

\end{document}